\theoremstyle{plain}
\newtheorem{theorem}{Theorem}[section]
\newtheorem{proposition}[theorem]{Proposition}
\newtheorem{remark}[theorem]{Remark}
\newtheorem{lemma}[theorem]{Lemma}
\newtheorem{definition}[theorem]{Definition}
\def\C{\mathbb C}
\def\Q{\mathbb Q}
\newcommand\sO{{\mathcal O}}
  \def\peen{\hbox{$ {\mathbf  P}^n$}}
  \def \tab#1{\kern #1 truein}
  \def\E{\hbox{${\cal E}$}}
  \def\A{\hbox{${{\cal A}}$}}
  \def\B{\hbox{${\cal B}$}}
  \def\C{\hbox{${\cal C}$}}
  \def\Q{\hbox{${\cal Q}$}}
  \def\O#1{\hbox{${\cal O}_{#1}$}}  
\begin{document}
 \title{Low Rank Vector Bundles on the Grassmannian $G(1,4)$} 
\author{Francesco Malaspina
\vspace{6pt}\\ 
 {\small  Politecnico di Torino}\\
{\small\it  Corso Duca degli Abruzzi 24, 10129 Torino, Italy}\\
{\small\it e-mail: malaspina@calvino.polito.it}}    \maketitle \def\thefootnote{}
\footnote{Mathematics Subject Classification 2000: 14F05, 14J60. \\ 
keywords: Universal Bundles on Grassmannians; Castelnuovo-Mumford regularity; Monads.}
  \begin{abstract}Here we define the concept of $L$-regularity for coherent sheaves on the Grassmannian $G(1,4)$ as a generalization of Castelnuovo-Mumford regularity on ${\bf{P}^n}$. In this setting we prove analogs of some classical properties. We use our notion of $L$-regularity in order to prove a splitting criterion for rank $2$ vector bundles with only a finite number of vanishing conditions. In the second part we give the classification of rank $2$ and rank $3$ vector bundles without "inner" cohomology (i.e. $H^i_*(E)=H^i(E\otimes\Q)=0$ for any $i=2,3,4$) on $G(1,4)$ by studying the associated monads.
  \end{abstract}  
  \section*{Introduction}
  In chapter $14$ of \cite{m} Mumford introduced the concept of regularity for a coherent sheaf on a projective space $\bf P^{n}$.
It was soon clear that Mumford's definition of Castelnuovo-Mumford regularity 
was a key notion and a fundamental tool in many areas of algebraic geometry and commutative algebra. It has shown a very powerful
tool, especially to study vector bundles. Chipalkatti generalized this notion to coherent sheaves on Grassmannians (\cite{c}) and  Costa and Mir\'o-Roig gave on any $n$-dimensional smooth projective varieties with an $n$-block collection (\cite{cm2}). In \cite{am}, it is introduced a simpler notion of regularity (called $G$-regularity) just on Grassmannians of lines by using the generalization of the Koszul exact sequence. It is a good tool because  it includes some vector bundles which are 
not regular in the sense of \cite{c} and can be use in order to characterize
direct sums of line bundles and give a
cohomological characterization of exterior and symmetric powers of
the universal bundles of the Grassmannian. Unfortunately this notion, consists of infinitely many cohomological vanishings.   However on $G(1,2)$ and $G(1,3)$ there are notions of regularity (which implies the $G$-regularity) with finite conditions: the Castelnuovo-Mumford regularity on 
 $G(1,2)\cong{\bf {P}}^2$ and the Qregularity on $G(1,3)\cong\Q_4$ (see \cite{bm}).\\
 In this paper we consider $G(1,4)$ and we give a notion of regularity with only a finite number of vanishing conditions. Next we show that the $L$-regularity implies the $G$-regularity and we prove the analogs of the classical properties on ${\bf{P}^{n+1}}$.\\ 
 A well-known result of Horrocks (see \cite{Ho}) characterizes the vector bundles without intermediate
cohomology on a projective space as direct sum of line bundles. This criterion fails on  more general varieties. There
exist non-split vector bundles without intermediate cohomology. These bundles are called ACM bundles. For instance the universal bundles of a Grassmannian are ACM. Ottaviani generalized Horrocks criterion to  quadrics
and Grassmannians by giving cohomological splitting conditions for vector bundles (see \cite{o1, o2}). Arrondo and Gra\~{n}a in \cite{ag} gave a cohomological  characterization of the universal bundles and a classification of ACM bundles on $G(1,4)$. In \cite{am} Arrondo and the author generalized   the first part of \cite{ag} by giving a cohomological characterization of exterior and symmetric powers of
the universal bundles on any grassmannian of lines.\\   
 Here we apply our notion of regularity in order to prove a splitting criterion for rank $2$ vector bundle (see Proposition \ref{p4}). We require the vanishing of the intermediate cohomology only for some particular twist. So  we have the analogous  of  \cite{cv} Corollary $1.8.$ on ${\bf {P}}^n$ and \cite{bm} Proposition $4.6.$ on $\Q_n$.\\
 
 In the second part of the paper we deal with monads. A monad on ${\bf{P}^n}$ or, more generally, on a projective variety $X$, is a complex of three vector bundles
$$0 \rightarrow A \xrightarrow{\alpha} B \xrightarrow{\beta} C \rightarrow 0$$
such that $\alpha$ is injective as a map of vector bundles and $\beta$ is surjective.
Monads have been studied by Horrocks, who proved (see \cite{Ho}) that every vector bundle on ${\bf{P}^n}$  is the homology of a suitable minimal monad.
This correspondence holds also on a projective variety $X$ ($\dim X\geq 3$) if we fix a very ample line bundle $\sO_X(1)$ (see \cite{ml1}). \\
Rao, Mohan Kumar and Peterson on ${\bf{P}^n}$ (see \cite{KPR1}), and the author on quadrics (see \cite{ml1, ml2}) gave a classification of rank $2$ and $3$  vector bundles  without inner cohomology (i.e. $H^1_*(E)= ...=H^{n-1}_*(E)=0$) by studying the associated minimal monads.\\
On $G(1,4)$ we say that a vector bundle is  without inner cohomology if $H^i_*(E)=H^i(E\otimes\Q)=0$ for any $i=2,3,4$. Then we classify the rank $2$ and $3$  vector bundles  without inner cohomology.
In particular we prove that there are no minimal monads with $A\not=0$ or $C\not=0$  associated to a rank $2$ and $3$  vector bundle  without inner cohomology.\\
 
We are grateful to E. Arrondo for the useful discussions and his comments.

 \section{Regularity on $G(1,4)$}
 Throughout the paper $\peen$ will denote the projective space
consisting of the one-dimensional quotients of the
$(n+1)$-dimensional vector space $V$ over an algebraically
closed field $\mathbb {K}$ with characteristic
zero. $G(1,4)$ (frequently
denoted just by $G$) will be the Grassmann variety of lines in ${\bf{P}^4}$. 
 We consider the universal exact sequence on $G=G(1,4)$:
     \begin{equation}\label{u}
     0\to S^\vee\to V\otimes{\cal O}_G\to Q\to 0 
     \end{equation}
     defining the universal bundles $S$ and $Q$ over $G$ of respective ranks
$3$ and $2$. We will also write
$\O{G}(1)=\bigwedge^2Q\cong\bigwedge^{3}S$. In particular, we have
natural isomorphisms
\begin{equation}\label{dual-Q}
S^j Q^\vee\cong(S^jQ)(-j)
\end{equation}
(where $S^j$ denotes the $j$-th symmetric power) and
\begin{equation}\label{dual-S}
\bigwedge^j S^{\vee}\cong\bigwedge^{3-j}S(-1).
\end{equation}
The second  exterior product in the left map
of (\ref{u}) is \begin{equation}\label{w2}
 0\to\bigwedge^{2} S^{\vee}\to \bigwedge^{2} V\otimes \O{G}\to V\otimes Q\to S^2 Q\to 0.\end{equation}
 Observe now that we can glue the dual of (\ref{u}) twisted by
$\O{G}(-1)$ with (\ref{w2}) and we obtain
\begin{equation}\label{+w2} 0\to \Q(-2)\to V^* \otimes \O{G}(-1)\to \bigwedge^{2} V\otimes \O{G}\to V\otimes Q\to      S^2 Q\to 0.\end{equation}
Let us consider also the dual sequence twisted by
$\O{G}(-3)$:
\begin{equation}\label{+w2d} 0\to S^2 Q(-3)\to V^*\otimes Q(-2)\to \bigwedge^{2} V^*\otimes \O{G}(-1)\to V\otimes \O{G}\to    Q\to 0.\end{equation}
If we glue (\ref{+w2}) with (\ref{u}) twisted by
$\O{G}(-2)$ we obtain
\begin{equation}\label{++w2}
0\to S^{\vee}(-2)\to  V\otimes \O{G}(-2)\to V^* \otimes \O{G}(-1)\to$$ $$\to \bigwedge^{2} V\otimes \O{G}\to V\otimes Q\to      S^2 Q\to 0.\end{equation}

We can also glue the dual of (\ref{w2}) twisted by
$\O{G}(-3)$ with (\ref{w2}) and we obtain \begin{equation}\label{w2+}
0\to\bigwedge^{2} S^{\vee}(-3)\to \bigwedge^{2} V\otimes \O{G}(-3)\to V\otimes Q(-3)\to$$ $$\to V^*\otimes Q(-2)\to \bigwedge^{2} V^*\otimes \O{G}(-1)\to V\otimes \O{G}\to    Q\to 0.\end{equation}
Let us consider also the dual sequence twisted by
$\O{G}(-4)$:
\begin{equation}\label{+d}
 0\to Q(-5)\to V^*\otimes \O{G}(-4)\to \bigwedge^{2} V\otimes \O{G}(-3)\to$$ $$\to V\otimes Q(-3)\to V^*\otimes Q(-2)\to \bigwedge^{2} V\otimes \O{G}(-1)\to    S^\vee\to 0.\end{equation}
Finally the top exterior product in the left map
of (\ref{u}) (twisted by  $\O{G}(-3)$) glued with the dual, it is the analogous  in $G$ of the long Koszul exact sequence in the
projective space. We have 
\begin{equation}\label{koz}0\to\sO_G(-4)\to\bigwedge^{3}
V\otimes{\O{G}}_G(-3)\to\bigwedge^{2}
V\otimes Q(-3)\to V\otimes S^{2}Q(-3)\to$$ $$\to V^*\otimes S^{2}Q(2)\to\bigwedge^{2}
V^*\otimes Q(-1)\to\bigwedge^{3}V^*\otimes\sO_G\to\sO_G(1)\to0.\end{equation}
\begin{remark}Let us notice that all the symmetric powers (except the last) that appear in sequence (\ref{++w2}) have order smaller than $2$. This is not true for the analog sequence  when $n>4$. For this reason the author in convinced that these ideas cannot be extended on $G(1,n)$ with $n>4$.\end{remark}

 We are ready to introduce our notion of regularity:\\

     \begin{definition}\label{d4} We say that a coherent sheaf $F$ on $G(1,4)$ is {\it $m$-$L$-regular} if the following conditions hold:
     \begin{enumerate}
     \item[i] $H^1(F(m-1))=H^2(F(m-2))=H^3(F(m-3))=H^4(F(m-3))=H^{5}(F(m-3))=H^5(F(m-4))=H^6(F(m-4))=0$.
     \item[ii] $H^{2}(F\otimes Q(m-2))=H^{3}(F\otimes Q(m-3))=H^{4}(F\otimes Q(m-3))=H^{4}(F\otimes Q(m-4))=H^{5}(F\otimes  Q(m-4))=0$.
     \item[iii] $H^{3}(F\otimes S^2 Q(m-3))=H^{4}(F\otimes S^2 Q(m-4))=H^{5}(F\otimes S^2 Q(m-5))=0$.
     
     \end{enumerate}
     We will say $L$-regular instead of $0$-$L$-regular.
     \end{definition}

  \begin{proposition}\label{p1} Let $F$ be a $L$-regular coherent sheaf on $G=G(1,4)$. For any $k\geq 0$,
  \begin{itemize}
  \item[(a)] $F(k)$ is $L$-regular.
  \item[(b)] $F(k)$ is generated by its global sections.
  \end{itemize}
  
  \end{proposition}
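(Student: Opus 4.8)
The plan is to follow the template of Mumford's classical regularity lemma, with the hyperplane-section induction replaced by the fundamental exact sequences (\ref{u})--(\ref{koz}), which here play the role of the Koszul complex on $\mathbf{P}^n$. First I would reduce (a) to a single step. Since $F(k)$ is $0$-$L$-regular precisely when $F$ is $k$-$L$-regular --- each cohomology group in Definition \ref{d4} for $F(k)$ at level $0$ being the corresponding group for $F$ at level $k$ --- assertion (a) for all $k\geq 0$ follows by induction once I prove the implication: if $F$ is $m$-$L$-regular, then $F$ is $(m+1)$-$L$-regular.

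For this inductive step the strategy is to raise by one each twist appearing in Definition \ref{d4}, feeding the level-$m$ vanishings through the long exact cohomology sequences obtained by tensoring the resolution sequences with $F$. Concretely, tensoring the Koszul-type sequence (\ref{koz}) (which resolves $\sO_G(1)$ in terms of $\sO_G$, $Q$ and $S^2Q$ with non-positive twists) by $F(t-1)$ and splitting it into short exact sequences expresses $H^i(F(t))$ through the three families $H^\bullet(F(\cdot))$, $H^\bullet(F\otimes Q(\cdot))$ and $H^\bullet(F\otimes S^2Q(\cdot))$ at strictly smaller twists; this governs the vanishings of group (i). The vanishings of groups (ii) and (iii) are obtained the same way from (\ref{u}), (\ref{w2}), (\ref{+w2}), (\ref{++w2}), (\ref{w2+}) and (\ref{+d}), using the isomorphisms (\ref{dual-Q}) and (\ref{dual-S}) to rewrite the bundles $S^\vee$, $\bigwedge^2 S^\vee$, $Q^\vee$ that occur back in the same three families. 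The point of the Remark after (\ref{koz}) is exactly that these families are closed under the chase, and the diagonal shape of Definition \ref{d4} is arranged so that at each step the $H^{i+1}$ term needed to kill $H^i$ of the next term in a sequence is already among the level-$m$ hypotheses --- this is why the slightly irregular twists (for instance $H^4(F(m-3))$ together with $H^5(F(m-4))$) were chosen.

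For (b) it suffices, in view of (a), to show that an $L$-regular sheaf is globally generated, and then to apply this to $F(k)$. Here I would first establish surjectivity of the multiplication maps $H^0(F(k))\otimes H^0(\sO_G(1))\to H^0(F(k+1))$ for $k\geq 0$, which drops out of the same cohomology chase together with the global generation of $\sO_G(1)$ (the Pl\"ucker polarization). Global generation of $F$ then follows from the Beilinson-type resolution of the diagonal on $G(1,4)$ associated to a block collection built from $\sO_G$, $Q$ and $S^2Q$ (in the spirit of \cite{cm2}): the $L$-regularity vanishings make the off-diagonal terms of the associated spectral sequence disappear, exhibiting $F$ as a quotient of a direct sum of the globally generated bundles $\sO_G$, $Q$, $S^2Q$ and their non-negative twists tensored with the relevant $H^0$-spaces, whence $F$ is generated by its global sections.

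The main obstacle I anticipate is the bookkeeping in the inductive step: the three groups (i), (ii), (iii) are genuinely interdependent --- pushing an $\sO_G$-vanishing up by one twist requires a $Q$- or $S^2Q$-vanishing at a lower twist, and conversely --- so one must order the deductions carefully and verify that the finite diagonal pattern of Definition \ref{d4} is stable under the whole system of long exact sequences. Splitting the eight-term sequences (\ref{w2+}), (\ref{+d}) and (\ref{koz}) into short exact sequences and tracking which intermediate syzygy bundles contribute in each cohomological degree is the delicate, computational heart of the argument; by contrast, once (a) and the surjectivity of the multiplication maps are in hand, part (b) is comparatively formal.
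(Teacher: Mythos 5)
Your plan for part (a) is essentially the paper's: reduce to the single implication ``$m$-$L$-regular $\Rightarrow$ $(m+1)$-$L$-regular'' and run the cohomology chase through the sequences (\ref{u})--(\ref{koz}), using that the three families $H^{\bullet}(F(\cdot))$, $H^{\bullet}(F\otimes Q(\cdot))$, $H^{\bullet}(F\otimes S^2Q(\cdot))$ are closed under the chase. This is correct in outline; the one thing you underestimate is that the chase does not close up on the conditions of Definition \ref{d4} alone --- the paper must first manufacture the auxiliary vanishings $H^6(F\otimes Q(-5))=H^6(F\otimes S^2Q(-6))=0$ from (\ref{+w2}) and (\ref{+w2d}) before the main induction can start. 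That is part of the bookkeeping you defer, and it does work out.

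Part (b) is where you diverge from the paper and where there is a genuine gap. The paper proves a short lemma that $L$-regularity implies the $G$-regularity of \cite{am} (two further families of vanishings, $H^1(F\otimes Q(k-1))=H^2(F\otimes S^2Q(k-2))=0$ for $k\geq 0$, obtained from the same sequences once (a) is known) and then simply quotes \cite{am}, Proposition 2.3, for global generation. You instead invoke a Beilinson-type resolution of the diagonal for a block collection ``built from $\sO_G$, $Q$ and $S^2Q$''. No such full collection exists on the six-dimensional $G(1,4)$: the Kapranov-type collection necessarily contains $S^3Q$ and its twists, so the resolution of the diagonal --- and hence the off-diagonal terms of your spectral sequence --- involves groups $H^i(F\otimes S^3Q(\cdot))$ about which $L$-regularity says nothing. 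The Remark following (\ref{koz}) is precisely the warning that the whole construction is designed to stay at or below second symmetric powers by using the hand-made sequences (\ref{u})--(\ref{koz}) instead of the full resolution of the diagonal. Your first step in (b) --- surjectivity of $H^0(F(k))\otimes H^0(\sO_G(1))\to H^0(F(k+1))$ --- is sound and does follow from the chase on (\ref{koz}); if you combine it with Serre's theorem (global generation of $F(k)$ for $k\gg 0$) in the classical Mumford fashion, you obtain (b) without any diagonal resolution. Either that repair or the paper's reduction to $G$-regularity is needed; the Beilinson step as you state it does not go through.
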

   \begin{proof}
  First of all let us prove that 
  $$H^{6}(F\otimes Q(-5))=H^{6}( F\otimes S^{2}Q(-6))=0$$
  From the sequence (\ref{+w2}),
   tensored  by $F(-3)$ we have that 
  $$H^{6}( F(-4))=H^{5}(F(-3))=H^{4}(F\otimes Q(-3))=H^{3}(F\otimes S^2 Q(-3))=0,$$ implies $H^{6}(F\otimes Q(-5))=0$.\\
  From (\ref{+w2d})
  tensored  by $F(-3)$ we have that 
  $$H^{6}( F\otimes Q(-5))=H^{5}(F(-4))=H^{4}(F(-3))=H^{3}(F\otimes Q(-3))=0,$$ implies $H^{6}(F\otimes S^2 Q(-6))=0$.\\

  Now let us show that
  $$H^1(F)=H^2(F(-1))=H^3(F(-2))=H^4(F(-2))=H^{5}(F(-2))=H^6(F(-3))=0.$$

  Let us consider the sequence (\ref{koz}) tensored  by $F(-1)$, since

$$H^{7}(F(-5))=H^{6}(
F(-4))=H^{5}(F\otimes Q(-4))=H^{4}( F\otimes S^{2}Q(-4))=$$ $$=H^{3}( F\otimes S^{2}Q(-3))=H^{2}(F\otimes Q(-2))=H^{1}(F(-1))=0,$$ we obtain $H^1(F)=0$.\\

If we tensor (\ref{koz}) by $F(-2)$, since

$$H^{7}(F(-6))=H^{6}(F\otimes Q(-5))=H^{5}( F\otimes S^{2}Q(-5))=$$ $$=H^{4}( F\otimes S^{2}Q(-4))=H^{3}(F\otimes Q(-3))=H^{2}(F(-2))=0,$$ we obtain $H^2(F(-1))=0$.\\

If we tensor (\ref{koz}) by $F(-3)$, since

$$H^{6}( F\otimes S^{2}Q(-6))=H^{5}( F\otimes S^{2}Q(-5))=H^{4}(F\otimes Q(-4))=H^{3}(F(-3))=0,$$ we obtain $H^3(F(-2))=0$.\\

Moreover, since

 $$H^{6}( F\otimes S^{2}Q(-5))=H^{5}(F\otimes Q(-4))=H^{4}(F(-3))=0,$$ we obtain $H^4(F(-2))=0$.\\

Since

 $$H^{6}(F\otimes Q(-4))=H^{5}(F(-3))=0,$$ we obtain $H^5(F(-2))=0$ and clearly $H^6(F(-3))=0$.\\

 Next we want show that $$H^{1}(F\otimes Q)=H^{2}(F\otimes Q(-1))=H^{3}(F\otimes Q(-2))=H^{4}(F\otimes (-2))=H^{5}(F\otimes  Q(-3))=0$$

 Let us consider the sequence (\ref{w2+}) tensored by $F(-3)$, since

 $$H^{6}(F(-4))=H^{5}(F(-3))=0,$$ we obtain $H^5(F\otimes Q(-3))=0$.\\
If we tensor (\ref{w2+}) by $F(-2)$, since

 $$H^{6}( F\otimes Q(-4))=H^{5}(F(-3))=H^{4}(F(-2))=0,$$ we obtain $H^4(F\otimes Q(-2))=0$.\\

Moreover, since

 $$H^{6}(F\otimes Q(-5))=H^{5}( F\otimes Q(-4))=H^{4}(F(-3))=H^{3}(F(-2))=0,$$ we obtain $H^3(F\otimes Q(-2))=0$.\\
 
If we tensor (\ref{w2+}) by $F(-1)$, since

 $$H^{6}( F(-4))=H^{5}(F\otimes Q(-4))=H^{4}(F\otimes Q(-3))=H^{3}(F(-2))=H^{1}(F(-1))=0,$$ we obtain $H^2(F\otimes Q(-1))=0$.\\

Let us prove finally that
$$H^{2}(F\otimes S^2 Q(-1))=H^{3}(F\otimes S^2 Q(-2))=H^{4}(F\otimes S^2 Q(-3))=H^{5}(F\otimes S^2 Q(-4))=0.$$ 

Let us consider the sequence (\ref{++w2}) tensored by $F(-4)$, since


 $$H^{6}(F(-4))=H^{5}(F\otimes Q(-4))=0,$$ we obtain $H^5(F\otimes S^2 Q(-4))=0$.\\
 
 Moreover, tensoring (\ref{++w2}) by $F(-3)$, since

 $$H^{6}(F(-4))=H^{5}(F(-3))=H^{4}(F\otimes Q(-3))=0,$$ we obtain $H^4(F\otimes S^2 Q(-3))=0$.\\
 
 If we tensor (\ref{++w2}) by $F(-2)$, since

 $$H^{6}(F(-4))=H^{5}(F(-3))=H^{4}(F(-2))=H^{3}(F\otimes Q(-2))=0,$$ we obtain $H^3(F\otimes S^2 Q(-2))=0$.\\

 $(b)$ We need the following lemma:
  \begin{lemma}\label{l1} Let $F$ be a $L$-regular coherent sheaf on $G$. Then, it is $G$-regular.
     \end{lemma}
     \begin{proof}We only need to show that, for any $k\geq 0$, $$H^{1}(F\otimes Q(k-1))=H^{2}(F\otimes S^2 Q(k-2))=0.$$
  From the sequence (\ref{w2})  tensored by $F(-4)$ we see that    $H^6(F\otimes\bigwedge^{2} S^{\vee}(-4))=0$. In fact $$H^6(F(-4))=H^5(F\otimes Q(-4))=H^4(F\otimes S^2 Q(-4))= 0.$$
 Let us tensorize the sequence (\ref{w2+}) by $F(-1)$. Since 
 
 $$H^6(F\otimes\bigwedge^{2} S^{\vee}(-4))=H^5(F(-4))=H^4(F\otimes Q(-4))=$$ $$=H^3(F\otimes Q(-3))=H^2(F(-2))=H^1(F(-1))=0,$$ we have $H^{1}(F\otimes Q(-1))=0$.\\

 From the sequence (\ref{u}) tensored by $F(-4)$ we see that    $H^6(F\otimes S^{\vee}(-4))=0$. In fact $$H^6(F(-4))=H^5(F\otimes Q(-4))= 0.$$
 Let us tensorize the sequence (\ref{++w2}) by $F(-2)$. Since 
 
 $$H^6(F\otimes S^{\vee}(-4))=H^5(F(-4))=H^4(F(-3))=H^3(F(-2))=H^2(F\otimes Q(-2))=0,$$ we have $H^{2}(F\otimes Q(-2))=0$.\\
 Now, since $F(k)$ is $L$-regular for any $k\geq 0$, we have the claimed vanishing for any $k\geq 0$.
  \end{proof}

  Since $F$ is $G$-regular then by \cite{am} Proposition $2.3.$ it is globally generated.
  \end{proof}
  
  \begin{definition}Let $F$ be a coherent sheaf on $G$. We define the $L$-regularity of $F$, $Lreg (F)$, as the least integer $m$ such that $F$ is $m$-$L$-regular. We set $Lreg (F)=-\infty$ if there is no such an integer.
  \end{definition}
      
      We can use the notion of $L$-regularity in order to prove a splitting criterion for rank $2$ vector bundles on $G$ with only a finite number of vanishing conditions:
      
      \begin{proposition}\label{p4}Let $E$ be a rank $2$ bundle on $G$ with $Lreg (E)= 0$. Let us assume that $$H^1(E(-2))=H^3(E(-4))=H^4(E(-4))=H^5(E(-5))=0,$$ and
     $$H^{2}(E\otimes Q(-3))=H^{3}(E\otimes Q(-4))=H^{4}(E\otimes Q(-5))=0.$$
Then $E\cong Q$ or $E\cong\sO\oplus\sO(a)$ with $a\geq 0$.
\end{proposition}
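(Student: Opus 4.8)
The plan is to combine global generation with the classical rank-two section argument on $G$. Since $Lreg(E)=0$ means $E$ is $L$-regular, Proposition \ref{p1}(b) (via Lemma \ref{l1} and \cite{am}, Proposition 2.3) shows that $E$ is globally generated. Because $\mathrm{Pic}(G)=\Z\cdot\sO(1)$ we may write $\det E=\sO(a)$, and since $\det E$ is a quotient of $\bigwedge^2(H^0(E)\otimes\sO)$ it is globally generated, so $a\geq 0$. Picking a general $s\in H^0(E)$, its zero scheme $Z=(s)_0$ is either empty or pure of codimension two with $[Z]=c_2(E)$; in the second case $s$ yields the Serre sequence $0\to\sO\xrightarrow{s}E\to\sI_Z(a)\to 0$. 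Everything therefore reduces to whether $c_2(E)=0$.

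If $c_2(E)=0$ then $s$ is nowhere zero and we get $0\to\sO\to E\to\sO(a)\to 0$, an extension classified by $\mathrm{Ext}^1(\sO(a),\sO)=H^1(\sO(-a))$. By Bott's theorem every line bundle on $G=G(1,4)$ has vanishing $H^1$, so the sequence splits and $E\cong\sO\oplus\sO(a)$ with $a\geq 0$.

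Assume now $c_2(E)\neq 0$; I claim $E\cong Q$. The first step is to force $a=c_1(E)=1$, and this is exactly where the extra hypotheses enter: feeding $E(-1),E(-2),\dots$ together with the vanishings $H^1(E(-2))=H^3(E(-4))=H^4(E(-4))=H^5(E(-5))=0$ and $H^2(E\otimes Q(-3))=H^3(E\otimes Q(-4))=H^4(E\otimes Q(-5))=0$ through the resolutions (\ref{+w2}), (\ref{++w2}), (\ref{w2+}) and the Koszul sequence (\ref{koz}), exactly as in the proof of Proposition \ref{p1}, one controls the cohomology of $E(-1)$ and of $\sI_Z(a-1)$ well enough to exclude every non-split possibility with $c_1(E)\neq 1$. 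With $a=1$ the Serre sequence becomes $0\to\sO\to E\to\sI_Z(1)\to 0$, where $[Z]=c_2(E)$ is a nonzero class in $H^4(G)$; global generation and smoothness of a general $Z$ then identify $Z$ with the Schubert variety of lines contained in a fixed hyperplane, i.e. $Z\cong G(1,3)$ of codimension two, which is precisely the zero locus of a section of $Q$.

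To turn this coincidence into an isomorphism I would compute $h^0(E)=5$ and consider the evaluation map $H^0(E)\otimes\sO\to E$, which is surjective by global generation; its kernel $K$ is a rank-three bundle with $c_1(K)=-1$. Using the hypotheses to establish the vanishing of the relevant twisted cohomology of $K$, one recognises $K\cong S^{\vee}$ by the cohomological characterisation of the universal bundles in \cite{am} (or \cite{ag}), so that $0\to S^{\vee}\to H^0(E)\otimes\sO\to E\to 0$ is the universal sequence (\ref{u}) and $E\cong Q$. The main obstacle is precisely this non-split case: ruling out all globally generated non-split rank-two bundles other than $Q$ forces one to push the full, finite list of twisted vanishings through the Koszul-type resolutions, and the delicate point—where the specific twists of the statement (tuned to the $(-1)$-$L$-regularity thresholds) are indispensable—is the normalisation $c_1(E)=1$, after which the identification with the universal sequence is comparatively formal.
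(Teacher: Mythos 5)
There is a genuine gap: the two steps that carry all the difficulty of the proposition are announced but not performed. In the non-split case you write that ``feeding $E(-1),E(-2),\dots$ through the resolutions \dots exactly as in the proof of Proposition \ref{p1}, one controls the cohomology of $E(-1)$ and of $\sI_Z(a-1)$ well enough to exclude every non-split possibility with $c_1(E)\neq 1$,'' and later that ``using the hypotheses to establish the vanishing of the relevant twisted cohomology of $K$, one recognises $K\cong S^{\vee}$.'' Neither computation is done, and neither is routine; these are precisely the content of the proposition. Moreover the intermediate assertion that, once $c_1(E)=1$, ``global generation and smoothness of a general $Z$ \dots identify $Z$ with the Schubert variety of lines contained in a fixed hyperplane'' does not follow from anything you have established: knowing $c_1(E)=1$ and $c_2(E)\neq 0$ does not determine the class of $Z$ in $H^4(G)$ (which has rank two), let alone its isomorphism type. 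As written, the argument reduces the proposition to unproved claims of comparable strength.

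The structural issue is that you never exploit the one piece of information that makes the finite list of hypotheses sufficient, namely that $Lreg(E)=0$ \emph{exactly}, so that $E(-1)$ fails to be $L$-regular. The paper's proof runs entirely on this: the failure of $L$-regularity of $E(-1)$ forces one of five specific cohomology groups to be nonzero. Case (i), $H^{6}(E(-5))\neq 0$, gives $H^0(E^\vee)\neq 0$ by Serre duality and hence $E\cong\sO\oplus\sO(a)$; case (ii) produces, via a commutative diagram of Serre-duality pairings identified with composition of homomorphisms, a composition $Q\to E\to Q$ equal to the identity, whence $E\cong Q$; cases (iii)--(v) are excluded using the listed vanishings and the sequences (\ref{+w2d}), (\ref{+d}). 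This avoids the Serre construction, Chern classes, and the evaluation-kernel argument altogether. Your outline could in principle be completed, but the completion would have to reconstruct essentially this case analysis (or an equivalent amount of cohomological bookkeeping), so in its current form the proposal does not constitute a proof.
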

     \begin{proof}If we apply Le Potier vanishing theorem to a rank $2$ bundle on $G$ with $Lreg(E)=0$, we obtain
$H^i(E(k-3))=0$ for any $i\geq 2$ and any $k\geq 0$,
so we have $H^2(E(-3)=0$.\\
    Since $Lreg (E)= 0$, $E$ is $L$-regular but $E(-1)$ not. $E(-1)$ is not $L$-regular  if and only if one of the following conditions is satisfied:
     \begin{enumerate}
     \item[i] $H^{6}(E(-5))\not=0$,
     \item[ii] $H^{3}(E(-1)\otimes S^{2}Q(-3)) \not=0$,
     \item[iii]  $H^{5}(E(-1) \otimes Q(-4) ) \not=0$,
     \item[iv] $H^{4}(E(-1)\otimes S^{2}Q(-4)) \not=0$,
      \item[v] $H^{5}(E(-1)\otimes S^{2}Q(-5)) \not=0$.\end{enumerate}
     \bigskip
     Let us consider one by one the conditions:\\
     $(i)$ Let $H^{6}(E(-5))\not=0$, so $H^0(E^{\vee})\not=0$ and $\cal O$ is a direct summand of $E$. Then $E\cong\sO\oplus\sO(a)$ with $a\geq 0$.\\
     $(ii)$ 
     Let $H^{3}(E(-1)\otimes S^{2}Q(-3)) \not=0$. Let us consider the  exact sequence (\ref{+w2d}) tensored  by $E(-1)$. Since $$H^{3}( E\otimes Q(-3))=H^{2}(E(-2))=H^{1}(E(-1))=0,$$ we see that $H^{0}(E\otimes Q(-1))\not=0.$\\
    From the sequence (\ref{+w2}) tensored  by $E(-4)$ we have that 
  $$H^{6}( E(-5))=H^{5}(E(-4))=H^{4}(E\otimes Q(-4))=0,$$ implies $H^{6}(E\otimes Q(-6))\cong H^{3}(E(-1)\otimes S^{2}Q(-3))$. But $H^{6}(E\otimes Q(-6))\cong H^{0}(E^\vee\otimes Q)$.

   Let us consider the following commutative diagram of natural morphisms:
     
     $$\begin{array}{ccc} H^{3}(E\otimes S^{2}Q(-4)) \otimes H^{3}(E^{\vee}\otimes S^{2}Q(-3)) & \xrightarrow{\sigma}& H^{6}(S^{2}Q \otimes S^{2}Q(-7))\\
     \uparrow & &\uparrow \\
    H^0(E\otimes Q(-1)) \otimes H^{3}(  E^\vee\otimes S^{2}Q(-3))  & \xrightarrow{\mu}& H^{3}(Q\otimes S^{2}Q(-4)  )\cong\mathbb C\\
    \uparrow & &\uparrow \\
    H^0(E\otimes Q^\vee) \otimes H^{0}(E^{\vee}\otimes Q )& \xrightarrow{\tau}& H^{0}(Q\otimes Q^{\vee})\cong\mathbb C\\
    \uparrow{\cong}&&\uparrow{\cong}\\
    {Hom}(Q,E)\otimes{Hom}(E,Q)&\xrightarrow{\gamma}&{Hom}(Q,Q)\end{array}$$ The map $\sigma$ comes from Serre duality and it is not zero, the right vertical map are isomorphisms and the left vertical map are surjective so also the map $\tau$ is not zero. This map is naturally identified with the map $\gamma$ consisting just of the composition of homomorphisms. This means that the composition of the  following maps $$Q \rightarrow E \rightarrow Q$$ is not zero. Since the endomorphisms of $Q$ are multiplications by scalars, we can assume (after multiplying by a suitable scalar) that the above composition is the identity.
     Now  we can conclude that $E\cong Q$.\\
  Now we have to show that the conditions $(iii)$, $(iv)$ and $(v)$ are not possible.\\
     $(iii)$ Let  $H^{5}(E(-1) \otimes Q(-4) ) \not=0$. Since $$H^{6}( E(-5))=H^{5}(E(-5))=0$$ we have $$H^{5}(E(-1) \otimes Q(-4) )\cong H^{6}(E\otimes S^\vee(-5))\cong H^{0}(E^\vee\otimes S),$$ so $H^{0}(E^\vee\otimes S)\not=0.$\\
     On other hand let us tensorize the sequence (\ref{+d}) by $E$. Since 
 
 $$H^5(E(-4))=H^4(E(-3))=H^3(E\otimes Q(-3))=H^2(E\otimes Q(-2))=H^1(E(-1))=0,$$ we have $H^{0}(E\otimes S^\vee)=0$. So we can conclude that  $ S$ is a direct summand of $E$. But $S$ has rank $3$ then we have a contradiction. \\
 
 $(vi)$ First of all we claim that $H^{1}(E \otimes Q(-2) )=0$.\\
     If $H^{1}(E \otimes Q(-2) )\not=0$ in fact, by arguing as above, we can conclude that  $ S^{\vee} $ is a direct summand of $E(-1)$. But $S^\vee$ has rank $3$ then we have a contradiction. \\
     Let $H^{4}(E(-1)\otimes S^{2}Q(-4)) \not=0$. Let us consider the  exact sequence (\ref{+w2d}) tensored  by $E(-2)$. Since $$H^{4}( E\otimes Q(-4))=H^{3}( E\otimes Q(-4))=H^{2}(E(-3))=H^{1}(E(-2))=0,$$ we have that $$H^{4}(E(-1)\otimes S^{2}Q(-4))\cong H^{1}(E \otimes Q(-2) ).$$
     $(v)$ Let us consider the  exact sequence (\ref{+w2d}) tensored  by $E(-3)$. Since $$H^{5}( E\otimes Q(-5))=H^{4}(E(-4))=H^{3}(E(-3))=H^{2}( E\otimes Q(-3))=0,$$ we have that $H^{5}( E\otimes S^2 Q(-6))=0$.
    
\end{proof}
\begin{remark}
We found the analogous of \cite{cv} Corollary $1.8.$ and \cite{bm} Proposition $4.6.$ on $G$.
\end{remark}

\section{Rank $2$ and rank $3$ vector bundles without inner cohomology}
We introduce the following definition:
\begin{definition}We will call bundle without inner cohomology a bundle $E$ on $G$ with 
$$H^i_*(E)=H^i_*(E\otimes Q) =0, \textrm{ for any $i=2,3,4.$}$$

\end{definition}
In this section we classify all the rank $2$ and rank $3$ bundles without inner cohomology.\\
Now we introduce the following tool: the monads.\\
Let $\E$ be a vector bundle on  $G$. There is the corresponding minimal monad
  $$0 \rightarrow A \xrightarrow{\alpha} B
\xrightarrow{\beta} C \rightarrow 0,$$ where $A$ and $C$ are sums of line
bundles and $B$ satisfies:
\begin{enumerate}
\item $H^1_*(B)=H^{n-1}_*(B)=0$ 
\item $H^i_*(B)=H^i_*(E)$ \ ${}\forall i, 1<i<5
$.
\end{enumerate}
A monad will be called minimal if the maps $\alpha$
and $\beta$ are minimal: the surjective map $\beta$ is said
minimal if no direct summand of $\C$ is the image of a line
subbundle of $\B$.\\
An equivalent condition is that  no generator of $B$ can be
sent in a
generator of $C$.\\
$\alpha$ is minimal if the surjective $\alpha^{\vee}$ is minimal
as defined for $\beta$.\\
If $M$ is a finitely generated graded module over the homogeneous coordinate ring of $G$, $S_G$, we denote by $\beta_i(M)$ the  total Betti numbers of $M$. We will mainly use  $\beta_0(M)$ which give the number of minimal generators of $M$.\\
Recall that if $$M \to N \to 0$$ is a surjection of finitely generated graded $S_G$-modules, then $\beta_0(M) \geq \beta_0(N)$. Furthermore, if the inequality is strict, it means that a set of minimal generators of $M$ can be chosen in such a way that one of generators in the set maps to zero. \\

\begin{remark}\label{r2} By \cite{ml2} Theorem $2.2.$  any minimal monad $$ 0 \to \A \xrightarrow{\alpha} \B
\xrightarrow{\beta} \C \to 0,$$ such that $\A$ or $\C$ are not
zero, for a rank $r$ ($r\leq 3$) bundle with $H^2_*(E)=H^{4}_*(E)=0$, must satisfy the
following conditions:
\begin{enumerate}

\item $H^1_*(\wedge^2\B)\not=0$, $\beta_0(H^1_*(\wedge^2\B))\geq
\beta_0(H^0_*(S_2\C))$,  if $\C$ is not zero.
\vskip0.8truecm
\item $H^1_*(\wedge^2\B^{\vee})\not=0$, $ \beta_0(H^1_*(\wedge^2\B^{\vee}))\geq
\beta_0(H^0_*(S_2\A^{\vee}))$, if $\A$ is not zero.
\vskip0.8truecm
 \item $H^2_*(\wedge^2\B)=H^2_*(\wedge^2\B^{\vee}) =0$.
\end{enumerate}
  \end{remark}  
  \begin{remark}Here we list the only non-zero intermediate cohomology of the universal bundles when tensored with $Q$ 
  and $S^\vee$ (see \cite{ag} Table $1.3$):\\
  $$h^1(Q\otimes S^\vee)=h^5(S\otimes Q(-5))=h^2(S^\vee\otimes S^\vee)=1.$$ 
  \end{remark}
  We are ready to prove the main result of this section:\\
      \begin{theorem}\label{i1}On $G$ the only rank $r$ ($r\leq 3$) bundles without inner cohomology are (up to twist) the following:\begin{enumerate}
\item for $r=2$, $Q$ and the sums of line bundles,\\
\item for $r=3$, $Q\oplus\sO(a)$, $S$, $S^\vee$ and the sums of line bundles.\\
\end{enumerate}
\end{theorem}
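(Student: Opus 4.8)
The plan is to use the structural constraint from Remark \ref{r2} to show that any rank $r\le 3$ bundle $E$ without inner cohomology has a \emph{trivial} minimal monad, i.e. $A=C=0$, forcing $E\cong B$ to be a bundle with the strong vanishing $H^1_*(B)=H^{n-1}_*(B)=0$ on top of the hypotheses. First I would observe that a bundle without inner cohomology in particular satisfies $H^2_*(E)=H^4_*(E)=0$, so Remark \ref{r2} applies verbatim: if $C\neq 0$ we must have $H^1_*(\wedge^2 B)\neq 0$ with a Betti-number inequality, and dually for $A$. The goal of the first half of the argument is therefore to prove that for the bundles in question $H^1_*(\wedge^2 B)=H^1_*(\wedge^2 B^\vee)=0$, which contradicts the nonvanishing required whenever $A$ or $C$ is nonzero, and hence forces $A=C=0$.

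The key computation is thus to control $H^1_*(\wedge^2 B)$. Since $B$ differs from $E$ only by line-bundle summands coming from $A$ and $C$ (and line bundles contribute nothing to intermediate cohomology), I would relate the intermediate cohomology of $\wedge^2 B$ to that of $\wedge^2 E$ together with cross terms of the form $E\otimes(\text{line bundle})=E(k)$, whose relevant cohomology is killed by the hypothesis $H^i_*(E)=0$ for $i=2,3,4$. The heart of the matter is to express $H^1$ of the relevant twists of $\wedge^2 B$ via the universal resolutions (\ref{+w2}), (\ref{+w2d}), (\ref{++w2}), (\ref{w2+}), (\ref{+d}), (\ref{koz}) applied to $E$, in the same style as the proof of Proposition \ref{p1} and Lemma \ref{l1}. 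Concretely, the two assumptions $H^i_*(E)=0$ and $H^i_*(E\otimes Q)=0$ for $i=2,3,4$ are exactly what is needed to chase these sequences and conclude that the intermediate cohomology of $\wedge^2 B$ vanishes in the range demanded by part 3 of Remark \ref{r2}, and to kill the $H^1$ groups demanded by parts 1 and 2.

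Once $A=C=0$ is established, $E\cong B$ is a bundle whose \emph{full} intermediate cohomology $H^i_*(E)$ for $1\le i\le 4$ vanishes, together with $H^i_*(E\otimes Q)=0$ for $i=2,3,4$. I would then invoke the cohomological characterization of the universal bundles and the classification of ACM bundles on $G(1,4)$ from \cite{ag}, recalled in the remark listing the nonzero intermediate cohomologies $h^1(Q\otimes S^\vee)=h^5(S\otimes Q(-5))=h^2(S^\vee\otimes S^\vee)=1$. A bundle with no inner cohomology and trivial monad is ACM in the appropriate sense, so it must be a direct sum of the indecomposable ACM bundles allowed on $G$, namely the line bundles $\sO(a)$ and the universal bundles $Q$, $S$, $S^\vee$; imposing the rank constraint $r\le 3$ and the vanishing $H^i_*(E\otimes Q)=0$ then eliminates the forbidden combinations. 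For $r=2$ the only indecomposable non-split possibility is $Q$ (rank $2$), while $S$ and $S^\vee$ (rank $3$) are excluded by rank; for $r=3$ one gets $S$, $S^\vee$, and $Q\oplus\sO(a)$ in addition to the split bundles. The main obstacle I anticipate is the bookkeeping in the second paragraph: verifying that the cross terms and the $\wedge^2$ twists really do fall into the cohomology-vanishing range after chasing the long resolutions, and in particular checking that the single nonzero class $h^1(Q\otimes S^\vee)=1$ does not sneak into $H^1_*(\wedge^2 B)$ and spuriously allow a nonzero $A$ or $C$; this is exactly the place where the extra hypothesis $H^i_*(E\otimes Q)=0$ must be used decisively.
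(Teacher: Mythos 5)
Your overall architecture (apply Remark \ref{r2} to rule out $A\neq 0$ and $C\neq 0$, then classify the resulting $E\cong B$ via the results of \cite{ag}) matches the paper's, but the central step is wrong: you propose to contradict Remark \ref{r2} by proving $H^1_*(\wedge^2 B)=H^1_*(\wedge^2 B^\vee)=0$ outright, and that vanishing is neither provable by your method nor the mechanism the paper actually uses. First, the paper's argument runs in the opposite direction: it first identifies $B$ explicitly as a direct sum of twists of $\sO$, $Q$, $S$, $S^\vee$ (this is the decisive use of \cite{ag} Theorem $2.4$, which your sketch omits entirely at this stage --- without it $\wedge^2 B$ is not computable), then observes that the nonvanishing $H^1_*(\wedge^2 B)\neq 0$ demanded by Remark \ref{r2} \emph{can} occur, precisely because $h^1(Q\otimes S^\vee)=1$, forcing $B$ to contain copies of $Q$, $S$ and $S^\vee$; the contradiction is then quantitative, comparing $k=\beta_0(H^1_*(\wedge^2 B))$ with $\beta_0(H^0_*(S_2C))\geq \mathrm{rank}(C)$ and the analogous inequality for $A$. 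A qualitative vanishing of $H^1_*(\wedge^2 B)$ is simply not available, so your intended contradiction cannot be reached that way.

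Second, your proposed computation of $H^1_*(\wedge^2 B)$ from $\wedge^2 E$ plus cross terms $E(k)$ cannot close. The filtration of $\wedge^2 B$ coming from $0\to A\to K\to E\to 0$ and $0\to K\to B\to C\to 0$ produces graded pieces $A\otimes E$ and $K\otimes C$, so controlling $H^1_*(\wedge^2 B)$ requires controlling $H^1_*(E(k))$ and (for $r=3$, via $\wedge^2E\cong E^\vee\otimes\det E$ and Serre duality) $H^5_*(E)$. But $H^1_*(E)$ and $H^5_*(E)$ are exactly the groups that the hypothesis ``without inner cohomology'' does \emph{not} kill --- indeed the whole content of the theorem is to prove they vanish. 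So the cohomology chase you describe assumes what is to be proved. To repair the argument you would need to follow the paper: pass to $B$, invoke \cite{ag} Theorem $2.4$ to write $B$ as a sum of universal bundles, use $H^2_*(S^\vee\otimes S^\vee)\neq 0$ to bound the multiplicities of $S$ and $S^\vee$ by one, and then derive the numerical contradiction from the Betti-number inequalities of Remark \ref{r2}. Your final paragraph (the classification of $E=B$ once $A=C=0$, with the rank bookkeeping) is essentially correct and agrees with the paper's conclusion.
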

\begin{proof}
First of all let us assume that $H^1_*(E)\not=0$ and $H^5_*(E)\not=0$. We can consider a  minimal monad for $E$,
$$ 0 \to A \xrightarrow{\alpha} B
\xrightarrow{\beta} C \to 0.$$ $B$  satisfies  all the hypothesis of \cite{ag} Theorem $2.4$ so it is a direct sum of bundles $S$, $S^\vee$, $Q$ and $\O{G}$ with some twist.\\
Moreover $B$ must satisfy the conditions $H^1_*(\wedge^2 B)\not=0$ and $H^1_*(\wedge^2 B^\vee)\not=0$.  Since  $\wedge^2 S^\vee$, $\wedge^2 S$ and $\wedge^2 Q$ are all ACM bundles and the only non-zero $H^1$ cohomology of the tensor product between  universal bundles is
  $h^1(Q\otimes S^\vee)=1$, $B$ must have at least a copy of $Q$, $S$ and $S^\vee$.\\
 Assume that  more than one copy of $S^\vee$ or more than one copy of $S$  appears in $B$. Then in the
bundle $\wedge^2 B$ or in the bundle $\wedge^2 B^\vee$, it appears
$(S^\vee\otimes S^\vee)(t)$  and, since
$$H^2_*(S^\vee\otimes S^\vee)\not=0,$$  the condition
$$H^2_*(\wedge^2 B)=H^2_*(\wedge^2 B^\vee)=0$$ in Remark \ref{r2}, fails to be satisfied.\\  We can conclude that $B$  has to be of the form $$ 
 (\bigoplus_{i=1}^h\sO(a_i))\oplus(\bigoplus_{j=1}^k Q(b_j))\oplus ( S(c))\oplus (S^\vee(d)),$$ with $h\geq 0$ and $k\geq 1$.\\

  Let us notice
furthermore that $rank(B)=h+2k+6$ and $H^1_*(\wedge^2 B)\cong H^1_*((\bigoplus_{j=1}^k Q)\otimes S^\vee)$ has  $k$ generators. Since $rank(C)= h+2k+6-rank(E)-rank(A)$, we have $$\beta_0(H^0_*(S_2 C))\geq \beta_0(H^0_*(C))=h+2k+6-rank(E)-rank(A)\geq h+2k +3-rank(A).$$
So $k=\beta_0(H^1_*(\wedge^2 B))\geq
\beta_0(H^0_*(S_2 C))\geq h+2k+4+rank(A)$ 
which implies $rank(A)\geq h+k+3$.\\
Moreover $H^1_*(\wedge^2 B^\vee)\cong H^1_*((\bigoplus_{j=1}^k Q)\otimes S)$ has  $k$ generators.
So $k=\beta_0(H^1_*(\wedge^2 B))\geq
\beta_0(H^0_*(S_2 A))\geq rank(A)\geq h+k+3$ 
which is impossible.\\

Let us assume now that $H^1_*(E)\not=0$ and $H^5_*(E)=0$ (hence $rank(E)=3$). By using the above argument we see that, since $H^1_*(\wedge^2 B)\not=0$, at least one copy of $S^\vee$ must appear in $B$.
Moreover, since $H^2_*(\wedge^2 B)=0$, it is no possible to have more than one copy of $S^\vee$
We can conclude that $B$  has to be of the form $$ 
 (\bigoplus_{i=1}^h\sO(a_i))\oplus(\bigoplus_{j=1}^k Q(b_j))\oplus ((\bigoplus_{l=1}^s S(c_l))\oplus (S^\vee(d)),$$ with $h,s\geq 0$ and $k\geq 1$.\\

  Let us notice
furthermore that $rank(B)=h+2k+3s+3$ and $H^1_*(\wedge^2 B)\cong H^1_*((\bigoplus_{j=1}^k Q)\otimes S^\vee)$ has  $k$ generators. Since $rank(C)= h+2k+3s$, we have $$\beta_0(H^0_*(S_2 C))\geq \beta_0(H^0_*(C))=h+2k+3s.$$
So $k=\beta_0(H^1_*(\wedge^2 B))\geq
\beta_0(H^0_*(S_2 C))\geq h+2k+3s,$ 
which it is impossible.\\
A symmetric argument show that there are no minimal monads in the case $H^1_*(E)=0$ and $H^5_*(E)\not=0$.\\
 We proved that the every rank $r$ ($r\leq 3$) bundle without inner cohomology must have $H^1_*(E)=H^5_*(E)=0$. Then by \cite{ag} Theorem $2.4$ they are the claimed.
   \end{proof}

\end{document}